\theoremstyle{plain}
\newtheorem{theorem}{Theorem}[section]
\newtheorem{lemma}[theorem]{Lemma}
\newtheorem{mainth}{Theorem}
\newtheorem{maincoro}{Corollary}
\theoremstyle{definition}
\newtheorem*{ac}{Acknowledgement}
\theoremstyle{remark}
\newtheorem{ex}[theorem]{Example}
\numberwithin{equation}{section}
\newcommand{\diam}{\operatorname{diam}}
\newcommand{\bR}{\mathbb R}
\newcommand{\bC}{\mathbb C}
\newcommand{\bN}{\mathbb N}
\newcommand{\bB}{\mathbb{B}}
\newcommand{\B}{\mathbb{B}}
\newcommand{\loc}{{\operatorname{loc}}}
\newcommand{\dist}{{\operatorname{dist}\,}}
\newdimen\vintkern\vintkern11pt
\def\vint{-\kern-\vintkern\int}
\newcommand{\norm}[1]{\lVert #1 \rVert}
\newcommand{\bS}{\mathbb{S}}
\newcommand{\R}{\mathbb{R}}
\newcommand{\cF}{\mathcal{F}}
\begin{document} 

\title[Rescaling principle for isolated essential singularities]{Rescaling principle for isolated essential singularities of quasiregular mappings}

\date{\today}

\author{Y\^usuke Okuyama}
\address{Division of Mathematics, Kyoto Institute of Technology, Sakyo-ku, Kyoto 606-8585, Japan}
\email{okuyama@kit.ac.jp}
\thanks{Y. O. is partially supported by JSPS Grant-in-Aid for Young Scientists (B), 24740087.}

\author{Pekka Pankka}
\address{University of Helsinki, Department of Mathematics and Statistics (P.O. Box 68), FI-00014 University of Helsinki, Finland}
\email{pekka.pankka@helsinki.fi}
\thanks{P. P. is partially supported by the Academy of Finland project \#256228.}

\subjclass[2010]{Primary 30C65; Secondary 53C21, 32H02}

\keywords{rescaling, isolated essential singularities, quasiregular mapping}

\maketitle

\begin{abstract}
We establish a rescaling theorem for isolated essential singularities of quasiregular mappings. 
As a consequence we show that the class of closed manifolds receiving a quasiregular mapping from a punctured unit ball with an essential singularity at the origin is exactly the class of closed quasiregularly elliptic manifolds, that is, closed manifolds receiving 
a non-constant quasiregular mapping from a Euclidean space.
\end{abstract}

\section{Introduction}
\label{sec:intro}

A continuous mapping $f\colon M\to N$ between oriented Riemannian $n$-manifolds
is \emph{$K$-quasiregular} if $f$ belongs to the Sobolev space $W^{1,n}_\loc(M,N)$ 
and satisfies the distortion inequality
\[
\norm{Df}^n \le K J_f \quad \mathrm{a.e.},
\]
where $\norm{Df}$ is the operator norm and $J_f$ is 
the Jacobian determinant of the differential $Df$ of $f$.

The main result of this paper is the following rescaling theorem. 
We denote the open unit ball about the origin in $\bR^n$ by $\B^n$. 
We say that a quasiregular mapping $f$ from $\bB^n\setminus\{0\}$ to a closed and oriented
Riemannian $n$-manifold $N$ has \emph{an essential singularity at the origin} if the limit 
$\lim_{x\to 0}f(x)$ does not exist in $N$.

\begin{mainth}\label{thm:3_option}
Let $N$ be a closed and oriented Riemannian $n$-manifold, $n\ge 2$, and let $f\colon \B^n\setminus\{0\} \to N$ be a $K$-quasiregular mapping with an essential singularity at the origin, $K\ge 1$. Then there exist a non-constant $K$-quasiregular mapping $g\colon X \to N$, where $X$ is either $\R^n$ or $\R^n\setminus \{0\}$, and sequences $(x_k)$ and $(\rho_k)$ in $\B^n$ and $(0,\infty)$, respectively, such that $\lim_{k\to\infty}x_k=0$, $\lim_{k\to\infty}\rho_k=0$ and
\begin{gather*}  
\lim_{k\to\infty}f(x_k+\rho_k v)= g(v)
\end{gather*}  
locally uniformly on $X$.
\end{mainth}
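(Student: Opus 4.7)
The plan is to reduce the theorem to a Zalcman--Miniowitz type rescaling principle for $K$-quasiregular mappings, followed by a case analysis on the asymptotic position of the rescaling centers $x_k$ relative to the origin. The two options $X=\bR^n$ and $X=\bR^n\setminus\{0\}$ should emerge naturally from the asymptotic value of $|x_k|/\rho_k$.

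The first step is to extract non-normality from the essential singularity hypothesis. I would consider the family $\cF=\{x\mapsto f(rx):r\in(0,1/2)\}$ of annular rescalings of $f$ on the fixed annulus $A=\{1/2<|x|<2\}$. If $\cF$ were uniformly equicontinuous, then by compactness of $N$ and Arzel\`a--Ascoli one could piece together subsequential limits on the overlapping annuli $A_j=\{2^{-j-1}<|x|<2^{-j+1}\}$ and produce, via a diagonal argument, a continuous extension of $f$ across the origin, contradicting the essentiality of the singularity. Some care is needed here, and the argument may invoke Miniowitz's normal-family criterion for quasimeromorphic mappings, or Rickman's value-distribution results at essential singularities, to convert the non-equicontinuity into blow-up of a normalized derivative at points arbitrarily close to $0$.

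The second step is the rescaling itself. Given the non-normality from step one, I would invoke the Miniowitz version of the Zalcman maximum principle to obtain sequences $x_k\in\bB^n\setminus\{0\}$ with $x_k\to 0$ and $\rho_k>0$ with $\rho_k\to 0$, chosen so that the rescaled maps $g_k(v):=f(x_k+\rho_k v)$ satisfy a normalization of the type $\|Dg_k(0)\|=1$ together with a uniform derivative bound on expanding balls $B(0,R_k)$ with $R_k\to\infty$. The compactness theorem for $K$-quasiregular mappings with compact target then yields a locally uniform subsequential limit $g$, which is non-constant by the derivative normalization at $v=0$ and $K$-quasiregular by scale invariance of the distortion inequality. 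Each $g_k$ is defined precisely on $\rho_k^{-1}((\bB^n\setminus\{0\})-x_k)$; the outer boundary of this set escapes to infinity with $k$, while the puncture contributes the single ``forbidden'' point $-x_k/\rho_k$ in the rescaled coordinates.

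The final step is the case analysis. After passing to a subsequence, $|x_k|/\rho_k$ has a limit in $[0,+\infty]$. If the limit is $+\infty$, the forbidden points escape to infinity and $g$ is defined on $X=\bR^n$. Otherwise, after a further subsequence $-x_k/\rho_k\to v_0\in\bR^n$, and $g$ is defined on $\bR^n\setminus\{v_0\}$; the substitution $(x_k,g)\mapsto(x_k+\rho_k v_0,\,g(\cdot+v_0))$, which preserves $x_k\to 0$ since $\rho_k v_0\to 0$, reduces this to the stated form with $X=\bR^n\setminus\{0\}$. The main obstacle is the first step: upgrading the qualitative essential-singularity hypothesis to a quantitative non-normality strong enough to drive the Zalcman selection. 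One must use the quasiregular structure to turn the oscillation of $f$ implied by essentiality into derivative concentration at some definite scale, at which the maximum-principle rescaling then proceeds in standard fashion.
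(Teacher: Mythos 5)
There is a genuine gap at your first step, and it sits exactly where the main difficulty of the theorem lies. You claim that if the family of annular rescalings $\{x\mapsto f(rx)\}$ were equicontinuous on a fixed annulus, then patching subsequential limits over the overlapping annuli would produce a continuous extension of $f$ across the origin, contradicting essentiality. That implication is false: equicontinuity of the rescaled family is perfectly compatible with an essential singularity, because $f$ may oscillate at a rate that is invisible at any single scale. The paper's Example 3.2 makes this explicit: the conformal map $f(x)=(x/|x|,\,e^{-i\log|x|})$ from $\bB^n\setminus\{0\}$ to $\bS^{n-1}\times\bS^1$ has an essential singularity at the origin, yet its rescalings $f(rx)$ are uniformly Lipschitz on every fixed annulus (they differ only by rotations in the $\bS^1$ factor); the subsequential limits along different sequences $r\to 0$ are different maps and none of them extends $f$ to $0$. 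Consequently you cannot in general convert essentiality into non-normality, there is no blow-up of a normalized derivative to feed into a Zalcman--Miniowitz selection (and note that for quasiregular maps with $n\ge 3$ a pointwise normalization such as $\|Dg_k(0)\|=1$ is not available anyway; Miniowitz's lemma works with a H\"older-type quantity), and your argument produces nothing precisely in the case that is responsible for the alternative $X=\R^n\setminus\{0\}$ in the statement.

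The paper instead runs a dichotomy on $M_f=\limsup_{x\to 0}Q_f(x)|x|^\beta$, where $Q_f(x)$ is the $\beta$-H\"older constant of $f$ on $\B^n(x,|x|/2)$. If $M_f=\infty$, the maps $z\mapsto f\bigl(y_k+\tfrac{|y_k|}{2}z\bigr)$ form a non-normal family (a locally uniform limit would be $K$-quasiregular yet not locally $\beta$-H\"older, which is impossible), and Theorem 2.1 (Miniowitz) yields a non-constant limit on all of $\R^n$; this is the only branch where a secondary rescaling $\rho_j\to 0$ occurs, so your step-three case analysis on $|x_k|/\rho_k$ is not where the issue lies. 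If $M_f<\infty$, the maps $v\mapsto f(e^{-k}v)$ are locally equicontinuous on $\R^n\setminus\{0\}$ and one takes an Arzel\`a--Ascoli limit directly; here the essential singularity enters not through non-normality but through a manifold version of the Lehto--Virtanen theorem (Lemma 3.1: $\limsup_{r\to 0}\diam f(\partial\B^n(r))>0$), which is what allows one to choose the subsequence so that the limit is non-constant. Some such ingredient ruling out constant limits in the equicontinuous case is indispensable, and it is missing from your proposal; the theorem cannot be reduced to the Zalcman--Miniowitz principle alone.
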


Theorem \ref{thm:3_option} bears a close resemblance to Miniowitz's Zalcman lemma for quasiregular mappings; see Miniowitz \cite{Miniowitz82} and Zalcman \cite{Zalcman75}. It seems, however, that this version for isolated essential singularities has gone unnoticed in the quasiregular literature although 
the heuristic idea behind this rescaling principle is well known in the classical holomorphic case ($n=2$ and $K=1$); see, e.g., Bergweiler \cite{Bergweiler00} and Minda \cite{Minda85}.

Theorem \ref{thm:3_option} readily yields the following characterization of closed and oriented Riemannian manifolds receiving a quasiregular mapping with an isolated essential singularity.

\begin{mainth}\label{th:elliptic}
Let $N$ be a closed and oriented Riemannian $n$-manifold, $n\ge 2$.
If there exists a $K$-quasiregular mapping $f\colon\bB^n\setminus \{0\} \to N$ having an essential singularity at the origin, $K\ge 1$, then there exists a non-constant $K'$-quasiregular mapping $g\colon\R^n\to N$ satisfying $g(\R^n)\subset f(\bB^n\setminus \{0\})$.

Conversely, if there exists a non-constant $K$-quasiregular mapping $g\colon\R^n\to N$, 
$K\ge 1$, then there exists a $K'$-quasiregular mapping $f\colon\bB^n\setminus \{0\} \to N$ having an essential singularity at the origin such that $f(\bB^n\setminus \{0\})\subset g(\R^n)$.

Here $K'=K'(n,K)\ge 1$ depends only on $n$ and $K$, and $K'(2,K)=K$.
\end{mainth}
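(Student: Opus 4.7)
For the forward implication I will invoke Theorem \ref{thm:3_option} applied to $f$, which produces a non-constant $K$-quasiregular limit $g_0 \colon X \to N$ of rescalings $f(x_k + \rho_k \cdot)$, with $X \in \{\R^n, \R^n \setminus \{0\}\}$. If $X = \R^n$, then $g_0$ is already the required map with $K' = K$. If $X = \R^n \setminus \{0\}$, the behaviour of $g_0$ at $0$ is either removable, in which case Reshetnyak's removable-singularity theorem extends $g_0$ quasiregularly over $0$, or it is itself an isolated essential singularity. This last subcase is where I expect the main obstacle to lie.

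The plan to handle it is to engineer a further rescaling that pushes the leftover singularity at $0$ to infinity. Concretely I will apply Miniowitz's version of Zalcman's lemma to the family $\{g_0(\rho\,\cdot)\}_{\rho>0}$ on the compact annulus $\{1/2 \le |v| \le 2\}$; this family fails to be equicontinuous precisely because of the essential singularity of $g_0$ at $0$, and the Zalcman conclusion supplies rescalings $g_0(\rho_k v_k + \rho_k \sigma_k w) \to h(w)$ locally uniformly, for $h$ a non-constant $K$-quasiregular map. Since $v_k$ stays in the annulus while $\sigma_k \to 0$, the preimage of the origin in the rescaled variable, namely $-v_k/\sigma_k$, diverges, so $h$ is automatically defined on all of $\R^n$. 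By a diagonal selection $h$ is simultaneously a locally uniform limit of rescalings of the original $f$, so $h(\R^n) \subset \overline{f(\B^n \setminus \{0\})}$; the sharper set-theoretic inclusion $h(\R^n) \subset f(\B^n \setminus \{0\})$ will then follow from the openness of $h(\R^n)$ combined with a Rickman-type big Picard theorem for the essential singularity of $f$.

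For the converse, given a non-constant $K$-quasiregular $g \colon \R^n \to N$, I will construct $f$ as the composition of $g$ with a quasiregular map from $\B^n \setminus \{0\}$ whose behaviour at $0$ is wild enough. The first attempt is $f(x) = g(\iota(x))$ with $\iota(x) = x/|x|^2$, post-composed with a reflection to preserve orientation. Since $\iota$ is $1$-quasiregular, $f$ is $K$-quasiregular with image in $g(\R^n)$, and it has an essential singularity at $0$ exactly when $\lim_{y \to \infty} g(y)$ fails to exist. If that limit does exist, then $g$ extends by Reshetnyak removability to a $K$-quasiregular $\bar g \colon S^n \to N$, and I will instead take $f = \bar g \circ h$ for a quasiregular $h \colon \B^n \setminus \{0\} \to S^n \setminus \{\infty\}$ with an essential singularity at $0$, built from the exponential when $n = 2$ (conformal, consistent with $K'(2,K) = K$) and from the Zorich map $Z \colon \R^n \to \R^n \setminus \{0\}$ when $n \ge 3$, which is $K_Z(n)$-quasiregular; concretely $h = \Sigma^{-1} \circ Z \circ \iota$ with $\Sigma$ stereographic projection. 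Avoiding the pole $\infty$ ensures $f(\B^n \setminus \{0\}) \subset g(\R^n)$, the essential singularity of $f$ at $0$ is inherited from the dense cluster set of the exponential or Zorich map at infinity, which the non-constant $\bar g$ cannot collapse to a single value, and the total distortion $K' = K \cdot K_Z(n)$ depends only on $n$ and $K$.
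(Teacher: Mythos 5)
Your converse half is essentially the paper's construction (precompose $g$ with the Zorich map, respectively the exponential for $n=2$, and a conformal inversion), and apart from the routine verification of the essential singularity it is fine. The forward half, however, has a genuine gap exactly in the subcase it was designed to handle, namely $X=\R^n\setminus\{0\}$ with $g_0$ having an essential singularity at $0$. Your plan there rests on the claim that the family $\{g_0(\rho\,\cdot)\}_{\rho>0}$ fails to be normal on the annulus $\{1/2\le |v|\le 2\}$ ``precisely because of the essential singularity''. For maps into a closed manifold this is false: the map $g_0(x)=(x/|x|,e^{-i\log|x|})$ into $N=\bS^{n-1}\times\bS^1$ (the paper's Example \ref{ex:1}, the case $M_f<\infty$) has an essential singularity at $0$, yet $g_0(\rho v)=(v/|v|,e^{-i\log\rho}e^{-i\log|v|})$ is a uniformly equicontinuous family on the annulus with compact target, hence normal, and Miniowitz's lemma gives you nothing; your argument has no fallback at that point. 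Moreover, no rescaling-only argument can close this gap, because your intended conclusion $K'=K$ is false in general: for this conformal ($K=1$) example with $n\ge 3$, a $1$-quasiregular map $\R^n\to\bS^{n-1}\times\bS^1$ would lift to a conformal map $\R^n\to\R^n\setminus\{0\}$, which by Liouville's theorem is the restriction of a M\"obius transformation and cannot omit both $0$ and $\infty$. The paper resolves the case $X=\R^n\setminus\{0\}$ in one line by precomposing $g_0$ with the Zorich map $Z_n\colon\R^n\to\R^n\setminus\{0\}$ (the exponential when $n=2$), which produces a non-constant $K\cdot K_n$-quasiregular map on all of $\R^n$ with image $g_0(\R^n\setminus\{0\})$; this is exactly where the constant $K'(n,K)$, with $K'(2,K)=K$, comes from.

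A second, smaller gap concerns the image inclusion. Your route ``$h(\R^n)\subset\overline{f(\B^n\setminus\{0\})}$, then openness of $h(\R^n)$ plus a big Picard theorem'' does not yield $h(\R^n)\subset f(\B^n\setminus\{0\})$: a Picard-type theorem would give $f(\B^n\setminus\{0\})=N\setminus E$ with $E$ finite, so the closure is all of $N$ and the inclusion into the closure carries no information, while openness of $h(\R^n)$ in no way prevents it from meeting the finite set $E$. The correct and standard tool is the Hurwitz-type argument the paper invokes (cf.\ the proof of Lemma 2 in Miniowitz): since the limit of the rescalings $f(x_k+\rho_k\,\cdot)$ is non-constant, each of its values is attained by these maps for large $k$, hence lies in $f(\B^n\setminus\{0\})$. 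The same issue appears in your removable subcase, where you would still need the extra value $g_0(0)$ of the extended map to lie in $f(\B^n\setminus\{0\})$, which neither removability nor locally uniform convergence on $\R^n\setminus\{0\}$ provides; the Zorich precomposition sidesteps the point $0$ altogether, since its image is exactly $\R^n\setminus\{0\}$.
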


Having Theorem \ref{th:elliptic} at our disposal, 
we readily obtain ``big'' versions of 
Varopoulos's theorem \cite[pp.\ 146-147]{VaropoulosN:Anagog} and
the  Bonk--Heinonen theorem \cite[Theorem 1.1]{BH01},
which respectively give a bound of 
the fundamental group 
and the de Rham cohomology ring of a closed quasiregularly elliptic manifold. 
Recall that a connected and oriented Riemannian $n$-manifold $N$, $n\ge 2$, 
is called \emph{quasiregularly elliptic} 
if there exists a non-constant quasiregular mapping from $\R^n$ to $N$.

\begin{maincoro}
\label{cor:bounds}
Let $N$ be a closed, connected, and oriented Riemannian $n$-manifold, $n\ge 2$, 
with a $K$-quasiregular mapping $\bB^n\setminus \{0\} \to N$ 
having an essential singularity at the origin, $K\ge 1$. 
Then the fundamental group $\pi_1(N)$ of $N$ has polynomial growth of order at most $n$, 
and the de Rham cohomology ring $H^*(N)$ of $N$ satisfies
\begin{equation}
\label{eq:dim}
\dim H^*(N) := \sum_{k=0}^n \dim H^k(N) \le C,
\end{equation}
where $C=C(n,K)>0$ depends only on $n$ and $K$.
\end{maincoro}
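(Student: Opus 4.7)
The plan is to deduce the corollary from Theorem \ref{th:elliptic}, reducing the ``big'' setting of an essential singularity on a punctured ball to the classical ``little'' setting of entire quasiregular mappings, where the theorems of Varopoulos and Bonk--Heinonen apply directly.

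Starting from the given $K$-quasiregular mapping $f\colon \bB^n\setminus\{0\}\to N$ with an essential singularity at the origin, I first invoke Theorem \ref{th:elliptic} to obtain a non-constant $K'$-quasiregular mapping $g\colon \R^n\to N$ with $K'=K'(n,K)$. Since $N$ is assumed closed, connected, and oriented, this exhibits $N$ as a closed quasiregularly elliptic $n$-manifold, and both conclusions are then the classical ones. Varopoulos's theorem \cite[pp.\ 146-147]{VaropoulosN:Anagog} immediately yields that $\pi_1(N)$ has polynomial growth of order at most $n$, proving the first assertion. The Bonk--Heinonen theorem \cite[Theorem 1.1]{BH01}, applied to $g$, gives $\dim H^*(N)\le C_0(n,K')$ for some $C_0$ depending only on $n$ and $K'$; setting $C(n,K):=C_0(n,K'(n,K))$ yields the bound (\ref{eq:dim}).

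I do not anticipate a genuine obstacle, since once Theorem \ref{th:elliptic} is in hand the corollary is a two-step reduction. The only point requiring minor attention is tracking the distortion through the reduction so that the final constant depends only on $n$ and $K$, but this is already encoded in the statement $K'=K'(n,K)$ of Theorem \ref{th:elliptic}; in particular, in dimension two one has $K'(2,K)=K$, so the constant in \eqref{eq:dim} reduces to the sharp Bonk--Heinonen constant in that case.
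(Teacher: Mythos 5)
Your proposal is correct and coincides with the paper's own (implicit) argument: the authors derive Corollary \ref{cor:bounds} exactly by applying Theorem \ref{th:elliptic} to get a non-constant $K'(n,K)$-quasiregular map $\R^n\to N$ and then citing Varopoulos and Bonk--Heinonen, with the constant tracked through $K'=K'(n,K)$ just as you do.
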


Although the former half of Corollary \ref{cor:bounds},
the big Varopoulos theorem,
is well-known to the experts, we have been unable to find it in the literature. 
For a direct proof of the big Bonk--Heinonen theorem, i.e., the bound \eqref{eq:dim}, see \cite{Pankka06}.

We would also like to note that 
together with the Holopainen--Rickman Picard theorem for quasiregularly elliptic manifolds \cite{HR98}, we obtain a big Picard type theorem for quasiregular mappings into closed manifolds; see also \cite{HP05}.

\begin{maincoro}\label{cor:bP}
Let $N$ be a closed, oriented, and connected Riemannian $n$-manifold, $n\ge 2$,
and $f\colon \bB^n \setminus \{0\}\to N$ be
a $K$-quasiregular mapping with an essential singularity at the origin, $K\ge 1$. 
Then for every $x\in N$, except for at most $q-1$ points, it holds that
$\#f^{-1}(x)=\infty$, where $q=q(n,K)\in\bN$ depends only on $n$ and $K$.
\end{maincoro}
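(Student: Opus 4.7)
The plan is to combine Theorem \ref{th:elliptic} with the Holopainen--Rickman Picard theorem \cite{HR98} applied to shrinking punctured neighborhoods of the singularity. I set
\[
E := \{x\in N : \#f^{-1}(x)<\infty\}
\]
and aim to show $\#E \le q(n,K) - 1$ for a suitable $q$.

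First I will note that each $x \in E$ is omitted by $f$ on some punctured neighborhood of the origin. Indeed, $f^{-1}(x) \subset \bB^n\setminus\{0\}$ is a finite set whose points are bounded away from the origin, so there exists $r > 0$ with $f^{-1}(x)\cap\bB^n(0,r) = \emptyset$. Setting $O_k := N\setminus f(\bB^n(0,1/k)\setminus\{0\})$, the sets $(O_k)$ form a nondecreasing sequence whose union contains $E$.

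Next, for each $k \ge 1$, I apply Theorem \ref{th:elliptic} to the rescaled mapping $f_k(y) := f(y/k)$ defined on $\bB^n \setminus \{0\}$. This $f_k$ is $K$-quasiregular with an essential singularity at the origin, so Theorem \ref{th:elliptic} produces a non-constant $K'$-quasiregular mapping $g_k\colon \R^n \to N$ with
\[
g_k(\R^n) \subset f_k(\bB^n \setminus \{0\}) = f(\bB^n(0,1/k)\setminus\{0\}),
\]
where $K' = K'(n,K)$. Hence $O_k \subset N\setminus g_k(\R^n)$. By the Holopainen--Rickman Picard theorem, there exists $q = q(n,K') \in \bN$ such that every non-constant $K'$-quasiregular mapping $\R^n \to N$ omits at most $q-1$ points of $N$; thus $\#O_k \le q-1$ for every $k$, and by the nesting,
\[
\#E \le \sup_k \#O_k \le q - 1,
\]
with $q$ depending only on $n$ and $K$ through $K'$. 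The argument is essentially mechanical once the two main inputs are in place; the only step that requires care is the reduction from the global condition $\#f^{-1}(x) < \infty$ to omission on some ball $\bB^n(0,1/k)\setminus\{0\}$, which is precisely what allows Theorem \ref{th:elliptic} to be applied at arbitrarily small scales and the Picard bound to be transferred back to $f$.
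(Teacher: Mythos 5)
Your argument is correct and follows exactly the route the paper indicates for this corollary: combining Theorem \ref{th:elliptic} with the Holopainen--Rickman Picard theorem, with the (necessary) extra bookkeeping of applying Theorem \ref{th:elliptic} on arbitrarily small punctured balls so that values with finite preimage become omitted values of the rescaled maps. No gaps; the constant $q$ depends only on $n$ and $K$ through $K'=K'(n,K)$, as required.
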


We conclude this introduction with an application of Theorem \ref{thm:3_option} 
to the Ahlfors five islands theorem; see, e.g., Bergweiler \cite{Bergweiler00} or Nevanlinna \cite[XII \S 7, \S 8]{Nevanlinna70} 
for a detailed discussion. 

Let $f$ be a {\itshape quasimeromorphic function} on a domain $U$ in
$\bS^2$, i.e, a quasiregular mapping from $U$ to $\bS^2$.
We say that $f$ has a \emph{simple island $\Omega$ over a Jordan domain $D'$ in $\bS^2$} 
if $\Omega$ is a subdomain in $U$ and is mapped univalently onto $D'$ by $f$.
The Ahlfors five islands theorem states that 
\emph{given five Jordan domains in $\bS^2$ with pair-wise disjoint closures, 
any non-constant quasimeromorphic function on $\bR^2$
has a simple island over one of these Jordan domains.}

\begin{maincoro}
Let $f$ be a quasimeromorphic function on $\bB^2\setminus\{0\}$ 
having an essential singularity at the origin. Then
given five Jordan domains $D_1,\ldots, D_5$ in $\bS^2$ with pair-wise disjoint closures,
$f$ has a simple island over one of $D_1,\ldots, D_5$.
\end{maincoro}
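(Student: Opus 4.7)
The plan is to apply Theorem~\ref{thm:3_option} to $f$, to locate a simple island of the resulting rescaling limit $g$ over one of the $D_j$ by a quasimeromorphic Ahlfors five islands theorem, and to transfer this island back to $f$ through a Hurwitz-type convergence argument on the rescaled maps $f_k(v):=f(x_k+\rho_kv)$.

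In detail, Theorem~\ref{thm:3_option} produces a non-constant $K$-quasimeromorphic limit $g\colon X\to\bS^2$ with $X\in\{\R^2,\R^2\setminus\{0\}\}$ together with sequences $x_k\to 0$ in $\bB^2$ and $\rho_k\to 0$ in $(0,\infty)$ such that $f_k\to g$ locally uniformly on $X$. To find a simple island $\Omega\subset X$ of $g$ over some $D_j$, I split into cases. If $X=\R^2$, the quasimeromorphic Ahlfors five islands theorem of Miniowitz--Bergweiler (see \cite{Miniowitz82,Bergweiler00}) applies directly to the entire quasimeromorphic map $g$. If $X=\R^2\setminus\{0\}$, either $g$ has an essential singularity at $0$ or $\infty$---in which case Miniowitz's Zalcman-type rescaling lemma, applied to $g$ at that singularity, yields a further non-constant quasimeromorphic limit $\tilde g\colon\R^2\to\bS^2$ to which the Miniowitz--Bergweiler theorem then applies, and the resulting island pulls back to $g$ by the same Hurwitz transfer as below---or else $g$ extends to a quasimeromorphic branched cover $\bar g\colon\bS^2\to\bS^2$ of finite degree $d$, in which case a Riemann--Hurwitz-style count of branch values of $\bar g$ against the five pairwise-disjoint $D_j$'s (if none of the $D_j$'s carried a simple preimage component, summing the branching contribution $d - m_j\ge d/2$ over the five $D_j$'s would exceed the total branching $2(d-1)$) produces a simple island of $\bar g$ over some $D_j$ whose closure can moreover be chosen to avoid the exceptional values $\bar g(0)$ and $\bar g(\infty)$.

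For the transfer step, suppose $\Omega\subset X$ is a simple island of $g$ over $D_j$, so that $g|_\Omega\colon\Omega\to D_j$ is a homeomorphism and $g(\partial\Omega)=\partial D_j$. Pick a precompact open neighborhood $U$ of $\overline\Omega$ in $X$ on which $f_k\to g$ uniformly. A standard Hurwitz-type argument for quasiregular mappings---using openness and discreteness of quasiregular maps together with preservation of local degree under locally uniform limits---shows that for every sufficiently large $k$ there exists a component $\Omega_k$ of $f_k^{-1}(D_j)$ inside $U$ on which $f_k\colon\Omega_k\to D_j$ is a homeomorphism. Unscaling, $x_k+\rho_k\Omega_k$ is a simple island of $f$ over $D_j$; since $x_k,\rho_k\to 0$ while $\Omega_k$ stays in a fixed compact subset of $\R^2$, this set lies in $\bB^2\setminus\{0\}$ for all large $k$, which proves the corollary.

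The main delicate point is the Hurwitz-type transfer of the simple-island structure under locally uniform convergence of quasiregular maps; this is a standard mechanism in the island literature but relies on the nontrivial preservation of local degree for convergent sequences of quasiregular maps. The subcase where $g$ extends to a branched cover of $\bS^2$ also needs some care in the branch-value bookkeeping, but reduces to an elementary Riemann--Hurwitz count.
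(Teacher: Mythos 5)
Your overall plan (rescale via Theorem~\ref{thm:3_option}, apply a five islands theorem to the limit $g$, transfer the island back to the rescaled maps $f_k$) is the paper's plan, but two of your steps have genuine gaps. The decisive one is the transfer: you find a simple island $\Omega$ of $g$ over $D_j$ \emph{itself} and claim that, by degree preservation under locally uniform convergence, each $f_k$ with $k$ large has a component of $f_k^{-1}(D_j)$ near $\Omega$ mapped homeomorphically onto $D_j$. This is false without a buffer between two nested Jordan domains. Take $g(z)=z^2$, $D_j=\{|w-1|<1\}$ (so the critical value $0$ lies on $\partial D_j$), and $f_k(z)=z^2+1/k$: then $g$ maps each lobe of $\{|z^2-1|<1\}$ univalently onto $D_j$, but $f_k^{-1}(D_j)=\{z\colon z^2\in\B^2(1-1/k,1)\}$ is a single component (the critical value $1/k$ of $f_k$ lies in $D_j$) mapped with degree $2$, so $f_k$ has no simple island over $D_j$ at all, no matter how close $f_k$ is to $g$ on a neighborhood of $\overline{\Omega}$. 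Degree is preserved only over points of $D_j$ at a definite distance from $g(\partial\Omega)=\partial D_j$, and there is no control as the value approaches $\partial D_j$. This is exactly why the paper first fixes enlarged Jordan domains $D_j\Subset D_j'$ with pairwise disjoint closures, applies the Ahlfors theorem to $g$ over the $D_j'$, and only then uses Rouch\'e to produce a simple island of $f_k$ over the original, strictly smaller $D_j$; your argument is missing this buffer and cannot be repaired by local degree preservation alone.

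The subcase $X=\R^2\setminus\{0\}$ with an essential singularity at $0$ or $\infty$ is also gapped: Miniowitz's lemma needs a non-normal family, and non-normality of rescalings at an isolated essential singularity is not automatic, so you cannot conclude a non-constant limit $\tilde g$ on all of $\R^2$. For example, $g=\wp\circ\log$ with period lattice $\mathbb{Z}+2\pi i\mathbb{Z}$ is single-valued and meromorphic on $\C\setminus\{0\}$ with essential singularities at $0$ and $\infty$, yet $|z|\,g^{\#}(z)$ is bounded, so by Marty's criterion every rescaled family $v\mapsto g(x_k+\rho_k v)$ is normal and all non-constant rescaling limits again live only on a punctured plane (this is precisely the case $M_f<\infty$ in the proof of Theorem~\ref{thm:3_option}); applying Theorem~\ref{thm:3_option} to $g$ can simply return $X=\R^2\setminus\{0\}$, so your reduction does not terminate. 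The paper avoids the whole case analysis (including your Riemann--Hurwitz subcase, whose count must in any event be corrected for the at most two preimage components containing $0$ or $\infty$): whenever the limit lives on $\R^2\setminus\{0\}$ it passes to the non-constant quasimeromorphic map $g\circ\exp$ on $\R^2$, applies the Ahlfors theorem to it over the enlarged domains $D_j'$, and notes that a simple island of $g\circ\exp$ projects to one of $g$ since univalence forces $\exp$ to be injective on it. With the exponential trick and the $D_j\Subset D_j'$ buffer, your outline becomes the paper's proof; without them, both steps above break.
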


\begin{proof}
Applying Theorem \ref{thm:3_option} to $f$, we obtain
sequences $(x_k)$ and $(\rho_k)$, and a non-constant quasimeromorphic function
$g$ on $\bR^2\setminus\{0\}$,
where $f_k$ is the mapping $v\mapsto f(x_k+\rho_k v)$ and $g$ is 
the locally uniform limit of $(f_k)$, as in Theorem \ref{thm:3_option}. 
We may fix Jordan domains $D_1',\ldots, D_5'$ in $\bS^2$
satisfying $D_j\Subset D_j'$ for every $j\in\{1,2,3,4,5\}$ and
having pair-wise disjoint closures.
By the Ahlfors five islands theorem,
the quasimeromorphic function
$g\circ \exp$ on $\bR^2$ has a simple island $\tilde{\Omega}'$ 
over one of these Jordan domains, say $D_j'$. 
Hence $g$ has a simple island $\Omega'$ over $D_j'$.
By Rouch\'e's theorem, 
for every $k\in\bN$ large enough,
$f_k$ has a simple island $\Omega_k\Subset\Omega'$ over $D_j\Subset D_j'$. 
\end{proof}

\begin{ac}
 The second author would like to thank Kai Rajala and Jang-Mei Wu for discussions related to Miniowitz's theorem.
\end{ac}

\section{Preliminaries}
\label{sec:pre}

Let $\B^n(x,r)$ be the open ball in $\bR^n$ about $x\in\bR^n$ of radius $r>0$. 
Set $\B^n(r):=\B^n(0,r)$ for each $r>0$ and set $\B^n:=\B^n(1)$. 
The corresponding closed balls are denoted by $\bar\B^n(x,r)$, $\bar\B^n(r)$, 
and $\bar\B^n$, respectively. 

Let $M$ be an oriented Riemannian $n$-manifold, $n\ge 2$. We denote by $|x-y|$ the distance between $x$ and $y$ in $M$, and by $B(x,r)$ the Riemannian ball $\{y\in M\colon|x-y|<r\}$ about $x\in M$ of radius $r>0$ in $M$. Similarly, we denote by $\bar B(x,r)$ the corresponding closed ball about $x\in M$ of radius $r>0$.

By \cite[III.1.11]{RickmanS:Quam}, every $K$-quasiregular mapping from an open set $U\subset \R^n$ 
to $\R^n$ is locally $\alpha$-H\"older continuous with $\alpha=(1/K)^{1/(n-1)}$. We refer to \cite{RickmanS:Quam} and \cite{Iwaniec-Martin-book} for the Euclidean theory of quasiregular mappings.

For every $x\in M$, there exist $r>0$ and a $2$-bilipschitz chart $B(x,r) \to \R^n$. Thus every $K$-quasiregular mapping from an open set $U\subset \R^n$ to $M$ is locally $\beta$-H\"older continuous with $\beta = \beta(n,K)$ depending only on $n$ and $K$.

The local H\"older continuity plays a key role in the proof of
the following manifold version (\cite[Theorem 19.9.3]{Iwaniec-Martin-book})
of Miniowitz's Zalcman lemma \cite[\S 4]{Miniowitz82}. Recall that a family $\cF$ of $K$-quasiregular mappings from a domain $\Omega$ in $\bR^n$ to $M$ is \emph{normal at $a\in \Omega$} if $\cF$ is normal on some open neighborhood of $a$.
 
\begin{theorem}
\label{th:Miniowitz}
Let $\Omega$ be a domain in $\R^n$, and $N$ be a closed, connected, and oriented Riemannian $n$-manifold, $n\ge 2$. Let $\mathcal{F}$ be a family of $K$-quasiregular mappings from $\Omega$ to $N$, 
$K\ge 1$. If $\mathcal{F}$ is not normal at $a\in \Omega$, then there exist sequences $(x_j)$, $(\rho_j)$, and $(f_j)$ in $\Omega$, $(0,\infty)$, and $\mathcal{F}$, respectively, and a non-constant $K$-quasiregular mapping $g \colon \R^n\to N$ such that $\lim_{j \to \infty} x_j = a$, $\lim_{j\to\infty}\rho_j=0$ and 
\begin{gather*}
\lim_{j\to\infty}f_j(x_j+\rho_j v)=g(v)
\end{gather*}
locally uniformly on $\bR^n$.
\end{theorem}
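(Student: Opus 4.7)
My plan is to implement the classical Zalcman max trick in the $\beta$-H\"older regularity setting, where $\beta=\beta(n,K)$ is the exponent supplied by the chart-based estimate recalled in Section~\ref{sec:pre}. Since $N$ is compact, non-normality of $\mathcal{F}$ at $a$ is equivalent, via Ascoli--Arz\`ela, to failure of equicontinuity at $a$; combined with the uniform local $\beta$-H\"older continuity, this forces, after extracting a subsequence $(f_j)\subset\mathcal{F}$ and fixing a small closed ball $\bar\B^n(a,r_0)\subset\Omega$, the existence of points $u_j,v_j\in\bar\B^n(a,r_0)$ with $u_j,v_j\to a$, $|u_j-v_j|\to 0$, and $d_N(f_j(u_j),f_j(v_j))\ge\epsilon_0>0$. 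In particular, the local $\beta$-H\"older constants of $f_j$ on $\bar\B^n(a,r_0)$ diverge to infinity.

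Now the Zalcman max trick with H\"older exponent $\beta$: on $\B^n(a,r_0)$ introduce the scale-invariant quantity
\[
Q_j(x):=\sup_{0<r\le \dist(x,\partial\B^n(a,r_0))/2}r^{-\beta}\,\diam f_j\bigl(\bar\B^n(x,r)\bigr),
\]
with $\diam$ measured in $N$, together with its boundary-vanishing envelope $\psi_j(x):=\dist(x,\partial\B^n(a,r_0))^{\beta}Q_j(x)$. Since each $f_j$ is $\beta$-H\"older on $\bar\B^n(a,r_0)$, $\psi_j$ is bounded, so an approximate maximizer $x_j\in\B^n(a,r_0)$ of $\psi_j$ exists; the H\"older divergence forces $\psi_j(x_j)\to\infty$, and a standard shrinking-domain diagonalization (applying the trick to each ball $\bar\B^n(a,r_0/k)$ and diagonalizing) ensures $x_j\to a$. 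Set $\rho_j:=Q_j(x_j)^{-1/\beta}\to 0$ and define the $K$-quasiregular rescalings $g_j(v):=f_j(x_j+\rho_j v)$ on domains exhausting $\R^n$. A direct rescaling computation yields $Q_{g_j}(0)=1$, the H\"older analogue of Zalcman's normalization $g_j^{\#}(0)=1$.

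The near-maximality of $\psi_j$ at $x_j$, rewritten in rescaled coordinates, furnishes for each $R>0$ and $j$ large a uniform bound $\sup_{v\in\bar\B^n(0,R)}Q_{g_j}(v)\le C(R)$, because the ratio $\dist(x_j+\rho_j v,\partial\B^n(a,r_0))/\dist(x_j,\partial\B^n(a,r_0))$ converges uniformly to $1$ on $\bar\B^n(0,R)$. Consequently $(g_j)$ is uniformly locally $\beta$-H\"older on $\R^n$; Ascoli--Arz\`ela together with compactness of $N$ yields a locally uniformly convergent subsequence with limit $g\colon\R^n\to N$, and $g$ is $K$-quasiregular by the standard closure of $K$-quasiregular mappings under locally uniform convergence. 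Non-constancy of $g$ follows from $Q_{g_j}(0)=1$: there exist scales $s_j'>0$ with $\diam g_j(\bar\B^n(0,s_j'))\ge(s_j')^{\beta}/2$, compactness of $N$ bounds $s_j'$ above, and after passing to a subsequence $s_j'\to s^*\in[0,\infty)$ the diameter lower bound transfers to $g$ at scale $s^*$ whenever $s^*>0$.

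The principal obstacle is adapting Zalcman's pointwise-derivative argument to the merely H\"older setting dictated by $K$-quasiregularity into a curved target: the ``derivative'' $Q_j(x)$ is genuinely scale-dependent, so one must track carefully how the maximization of $\psi_j$ translates, after rescaling, into uniform H\"older control on ever larger balls, and verify by rescaling computation that the normalization $Q_{g_j}(0)=1$ is preserved. A subsidiary technicality is the degenerate case $s^*=0$ in the non-constancy argument; this is overcome by a further rescaling $w\mapsto g_j(s_j'w)$, for which the uniform H\"older bound persists and the renewed lower estimate $\diam g_j(s_j'\bar\B^n(0,1))\ge 1/2$ at the new unit scale allows a second Ascoli--Arz\`ela pass, producing a non-constant $K$-quasiregular limit on $\R^n$.
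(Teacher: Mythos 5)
Your overall architecture (a Zalcman-type maximization of a scale-invariant H\"older quotient, rescaling by $\rho_j=Q_j(x_j)^{-1/\beta}$, equicontinuity from near-maximality of $\psi_j$, Ascoli--Arzel\`a plus closure of $K$-quasiregularity under locally uniform limits) is essentially the Miniowitz--Zalcman scheme that the paper simply cites (Miniowitz; Iwaniec--Martin, Thm.\ 19.9.3), and those steps are fine, including the bound $\sup_{\bar\B^n(0,R)}Q_{g_j}\le C(R)$ and $x_j\to a$ via shrinking balls. The genuine gap is in the non-constancy step. Your normalization $Q_{g_j}(0)=1$ is a supremum over \emph{all} scales up to $R_j/2$, and locally uniform convergence gives no control of $s^{-\beta}\diam g_j(\bar\B^n(0,s))$ at scales $s=s_j'\to 0$; so the normalization need not pass to the limit, and a priori $g$ could be constant. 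Your remedy for the degenerate case $s^*=0$ does not work as stated: for $h_j(w):=g_j(s_j'w)$ the available lower bound is $\diam g_j(\bar\B^n(0,s_j'))\ge (s_j')^{\beta}/2\to 0$, not $\ge 1/2$ (there is no target rescaling available, since $N$ is a fixed compact manifold), and indeed your own uniform bound gives $\diam h_j(\bar\B^n(0,R))\le C(R)(s_j'R)^{\beta}\to 0$, so the ``second Ascoli--Arzel\`a pass'' can only produce a constant limit. Thus the case $s^*=0$ is exactly where the proof is incomplete.

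The gap can be closed, but it needs an ingredient you never invoke: the quantitative form of the H\"older estimate behind the definition of $\beta$ (Rickman III.1.11 read through a $2$-bilipschitz chart), namely $\operatorname{osc}_{\bar\B^n(0,s)}u\le C(n,K)\,\bigl(\operatorname{osc}_{\B^n(0,1)}u\bigr)\,s^{\beta}$ for $K$-quasiregular $u$ with small image. If $g$ were constant, then for large $j$ the set $g_j(\bar\B^n(0,1))$ lies in a single bilipschitz chart and this estimate, combined with $\diam g_j(\bar\B^n(0,s_j'))\ge (s_j')^{\beta}/2$, forces $\operatorname{osc}_{\B^n(0,1)}g_j\ge 1/(2C)>0$, contradicting $g_j\to\mathrm{const}$; this settles both cases $s^*>0$ and $s^*=0$ at once. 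Alternatively (and this is the route of the cited proofs) one normalizes so that non-constancy is automatic: fix a small $\epsilon_0>0$ and choose $\rho_j$ from the radius at which the oscillation of $f_j$ about $x_j$ first reaches $\epsilon_0$, so that $\diam g_j(\bar\B^n)\ge\epsilon_0$ at the \emph{fixed} unit scale, which persists under locally uniform convergence. A minor further point: your opening claim of a single $\epsilon_0$ with $u_j,v_j\to a$ is not immediate from non-normality at $a$ (equicontinuity fails at points tending to $a$, with moduli possibly depending on the point); the shrinking-ball diagonalization you mention is what actually delivers $\psi_j(x_j)\to\infty$ together with $x_j\to a$, so this is a presentational slip rather than a structural one.
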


\section{Proof of Theorem \ref{thm:3_option}}

We begin the proof of Theorem \ref{thm:3_option} by showing
a manifold version of a classical lemma on isolated essential singularities
due to Lehto and Virtanen \cite{LV57Fennicae}; see also 
Heinonen and Rossi \cite[Theorem 2.3]{HR90} and Gauld and Martin \cite{GauldMartin93}.

\begin{lemma}\label{th:HeinonenRossi}
Let $M$ be a closed and oriented Riemannian $n$-manifold, $n\ge 2$,
and $f \colon \bB^n \setminus \{0\}\to M$ be
a quasiregular mapping with an essential singularity at the origin. Then
\begin{gather}
\label{eq:bounded}
\limsup_{r\to 0}\diam f(\partial \B^n(r))>0.
\end{gather}
\end{lemma}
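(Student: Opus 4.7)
The plan is a proof by contradiction. Assume $\phi(r) := \diam f(\partial \B^n(r)) \to 0$ as $r \to 0$; I shall argue that $f$ then extends continuously across the origin, and the standard removable singularity theorem for quasiregular mappings forces the extension to be $K$-quasiregular on $\B^n$, contradicting the essential singularity hypothesis.

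The core ingredient is a normal family argument on rescaled mappings. For each small $r > 0$ and each fixed $T > 1$, let $g_r \colon A_T \to N$, $g_r(v) := f(rv)$, be the rescaled mapping on the spherical shell $A_T := \{v \in \R^n : T^{-1} \le |v| \le 1\}$. Rickman's local H\"older estimate combined with local $2$-bilipschitz charts on $N$ (cf.\ Section~\ref{sec:pre}) gives a uniform H\"older bound on $\{g_r\}_{r > 0}$ with H\"older constant depending only on $n, K, T$ and $\diam N$; by Arzel\`a--Ascoli the family is normal. Any locally uniform limit $g$ of a subsequence $g_{r_k}$ is either $K$-quasiregular or constant by the standard stability of quasiregular mappings under uniform convergence. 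For each $\rho \in [T^{-1}, 1]$, $g_{r_k}(\partial \B^n(\rho)) = f(\partial \B^n(r_k \rho))$ has diameter $\phi(r_k \rho) \to 0$, so $g$ is constant on every such sphere; since a non-constant quasiregular mapping is discrete (Reshetnyak), no fibre can contain an uncountable sphere, forcing $g$ to be constant. Thus
\[
\lim_{r \to 0} \diam f(\{r/T \le |x| \le r\}) = 0 \quad \text{for every fixed } T > 1.
\]

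Next, I would upgrade this annular smallness to continuity of $f$ at the origin. The cluster set $C := \bigcap_{r > 0} \overline{f(\B^n(r) \setminus \{0\})}$ is a non-empty compact connected subset of $N$. Were $C$ to contain two distinct points $y_1, y_2$ with disjoint neighbourhoods $U_1, U_2 \subset N$, the continuous path $r \mapsto f(re_1)$ would visit both $U_1$ and $U_2$ infinitely often as $r \to 0$, producing transition radii $\rho'_k < \rho_k \to 0$ with $f(\rho_k e_1) \in U_1$ and $f(\rho'_k e_1) \in U_2$. A subsequence with bounded ratio $\rho_k/\rho'_k$ immediately contradicts the annular smallness just established. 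An unbounded-ratio subsequence $\rho_k/\rho'_k \to \infty$ is ruled out by a conformal modulus estimate for quasiregular mappings between the long thin annulus $\{\rho'_k \le |x| \le \rho_k\}$ in $\B^n$ and its image in $N$, using Poletsky's inequality together with a quantitative modulus estimate in the closed Riemannian manifold $N$. Hence $C = \{y_0\}$ is a single point, $f$ extends continuously to $\B^n$ by $f(0) := y_0$, and the removable singularity theorem completes the contradiction.

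The main obstacle is precisely the unbounded-ratio subcase above: excluding slow transitions between accumulation points across arbitrarily long log-scale ranges. The normal family argument alone does not forbid such a slow drift of the sphere-images from a neighbourhood of $y_1$ to one of $y_2$; the argument requires a conformal modulus inequality for quasiregular maps together with quantitative modulus estimates in the closed target $N$, both standard but subtle in direction and quantitative form.
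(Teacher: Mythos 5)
Your plan hinges on two steps that do not hold up. The first and most serious is the claim that the rescaled maps $g_r(v)=f(rv)$ on the shell $\{T^{-1}\le|v|\le 1\}$ satisfy a uniform H\"older bound with constant depending only on $n,K,T$ and $\diam N$, hence form a normal family by Arzel\`a--Ascoli. Only the H\"older \emph{exponent} $\beta(n,K)$ is uniform; the H\"older \emph{coefficient} depends on the individual map, and compactness of the target does not control it. For instance, $f_k(z)=z^k$, viewed as $1$-quasiregular maps from the shell $\{1/2\le|z|\le1\}$ into $\bS^2$, admit no common H\"older bound and are not a normal family. This failure of automatic equicontinuity for quasiregular maps into closed manifolds is precisely why the paper's proof of Theorem 1 must split into the cases $M_f=\infty$ (non-normal, handled via Miniowitz's rescaling theorem) and $M_f<\infty$. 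Consequently your intermediate claim $\diam f(\{r/T\le|x|\le r\})\to 0$ is unproven, and with it the bounded-ratio half of your cluster-set argument.

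The second gap is the unbounded-ratio subcase, which you yourself identify as the main obstacle, and the tools you cite point the wrong way. Poletsky's (or V\"ais\"al\"a's) inequality bounds $M(f\Gamma)$ \emph{from above} by a multiple of $M(\Gamma)$; for the long ring $\{\rho'_k\le|x|\le\rho_k\}$ the family $\Gamma$ of curves joining its boundary spheres already has small modulus, and on the image side the two boundary-sphere images have small diameter by your standing assumption, so curves joining them in $N$ also have small modulus. Neither direction produces a contradiction, and indeed nothing capacity-theoretic by itself forbids a slow drift of small sphere-images from near $y_2$ to near $y_1$ across a conformally long annulus. What the paper does instead (following Heinonen--Rossi) is purely topological: it takes two asymptotic values $a\neq b$, finds radii $r_2<r_1$ with $f(\partial\B^n(r_1))\subset B(a,R)$ and $f(\partial\B^n(r_2))$ inside a small ball centered on $\partial B(a,2R)$, picks a point $y_0$ of a segment joining the two spheres whose image lies in the intermediate region, joins $f(y_0)$ to $b$ by a path avoiding $f(\partial A(r_2,r_1))$ (possible since the complement of the two small closed balls in the bilipschitz chart $B(a,4R)$ is connected), and then lifts this path under $f$: a maximal lift under a discrete open map must exit the ring $A(r_2,r_1)=\B^n(r_1)\setminus\bar\B^n(r_2)$ through its boundary, which the path was chosen to avoid. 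That argument needs no normal families, no continuity at the origin, and no modulus estimates, and it is the route you would need to replace both problematic steps.
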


\begin{proof}
The proof follows the argument of Heinonen and Rossi in \cite{HR90}.

Since the origin is an isolated essential singularity of $f$, there exist sequences $(z_k)$ and $(w_j)$ in $\B^n\setminus\{0\}$ such that $\lim_{k\to\infty}z_k=\lim_{j\to\infty}w_j=0$ and that both limits
\begin{gather*}
 a:=\lim_{k\to\infty}f(z_k)\quad\text{and}\quad b:=\lim_{j\to\infty}f(w_j) 
\end{gather*} 
exist in $M$ and are distinct. 

We choose $R\in(0,|b-a|/4)$ so small that $B(a,4R)$ is $2$-bilipschitz equivalent to an open ball in $\bR^n$. Note that, in particular, $b\not\in\bar B(a,4R)$.

Suppose that \eqref{eq:bounded} does not hold.
Then there is $r_0>0$ such that, for every $r\in(0,r_0)$,
$\diam(f(\partial \B^n(r)))<R/2$.

We fix $k_1\in\bN$ so large that $|z_{k_1}|<r_0$ and that $|z_{k_1}-a|\le R/2$. 
Let $r_1:=|z_{k_1}|$. Since $\diam(f(\partial \B^n(r_1)))<R/2$, we have
\begin{gather}
 f(\partial \B^n(r_1))\subset B(a,R).\label{eq:inside}
\end{gather}
Since $f(w_j) \to b\not\in \bar B(a,4R)$ as $j \to \infty$, 
the continuity of $f$ implies the existence of the maximal element, 
say $r_2\in (0,r_1)$, of the subset
\begin{gather*}
 \{r\in(0,r_1] \colon f(\partial \B^n(r))\not\subset B(a,2R)\}.
\end{gather*}
By the maximality, $f(\partial \bB^n(r_2))\cap \partial B(a,2R)\ne \emptyset$. Let $c\in f(\partial \B^n(r_2))\cap\partial B(a,2R)$. 
Then $\diam(f(\partial \B^n(r_2)))<R/2$ and
\begin{gather}
 f(\partial \B^n(r_2))\subset B(c,R/2).\label{eq:outside}
\end{gather} 

We join $\partial \B^n(r_1)$ and $\partial \B^n(r_2)$ by a line segment 
 $\ell$, which is contained, except for the end points, in the ring domain 
\begin{gather*}
A(r_2,r_1):=\B^n(r_1)\setminus \bar\B^n(r_2).
\end{gather*} 
Then the path $f(\ell)$ in $M$ joins $f(\partial \B^n(r_1))$ and $f(\partial \B^n(r_2))$, and by \eqref{eq:inside}, \eqref{eq:outside} and the choice of $r_1$ and $r_2$, we may fix $y_0\in\ell$ such that
\begin{gather*}
f(y_0)\in B(a,2R)\setminus(\bar B(a,R)\cup\bar B(c,R/2)).
\end{gather*}
Since $B(a,4R)$ is bilipschitz equivalent to a Euclidean $n$-ball, $M\setminus (\bar B(a,R)\cup \bar B(c,R/2))$ is connected. Thus
$f(y_0)$ can be joined with $b\not\in \bar B(a,4R)$ by a path 
$\beta \colon [0,1]\to M$ 
such that $\beta([0,1])\cap f(\partial (A(r_2,r_1)))=\emptyset$.

Let $\alpha \colon I_0\to\bB^n\setminus\{0\}$, where $I_0=[0,1]$ or $[0,t_0)$ for some $t_0\in(0,1]$, be a maximal lift of $\beta$ under $f$ starting at $y_0=\alpha(0)\in A(r_2,r_1)$. 
If $I_0 = [0,1]$, then $f(\alpha(1))=b\not \in f(A(r_2,r_1))$, so $\alpha(1)\not\in A(r_2,r_1)$. If $I_0\ne [0,1]$, then $\dist(\alpha(t),\partial \bB^n\cup \{0\}) \to 0$ as $t\to t_0$. 
In both cases, 
$\beta(I_0)\cap f(\partial A(r_2,r_1))=f(\alpha(I_0)\cap\partial A(r_2,r_1)) \neq \emptyset$. This is a contradiction and \eqref{eq:bounded} holds.
\end{proof}

\begin{proof}[Proof of Theorem $\ref{thm:3_option}$]
Define a function $Q_f\colon \B^n\setminus\{0\}\to [0,\infty)$ by
\begin{gather*}
 Q_f(x) := \sup_{y,y'\in \B^n(x,|x|/2), y\neq y'}\frac{|f(y)-f(y')|}{|y-y'|^\beta},
\end{gather*}
where $\beta=\beta(n,K)$ is as in Section \ref{sec:pre}. Put
\begin{gather*}
 M_f:=\limsup_{x\to 0}Q_f(x)|x|^{\beta}.
\end{gather*}

Suppose first that 
\begin{gather}
\label{eq:infty}
M_f=\infty,
\end{gather}
or equivalently, that there exists a sequence $(y_k)$ in $\B^n\setminus\{0\}$ satisfying $y_k\to 0$ and $Q_f(y_k)|y_k|^{\beta} \to \infty$ as $k \to \infty$.

Fix $\delta\in (0,1)$ and, for each $k\in\bN$,
define a mapping $g_k \colon \B^n(1+\delta) \to N$ by 
\begin{gather*}
g_k(z):= f(y_k+\frac{|y_k|}{2}z).
\end{gather*}
By \eqref{eq:infty}, there exist sequences $(z_k)$ and $(w_k)$ in $\B^n$ satisfying
\begin{gather*}
\limsup_{k\to\infty}\frac{|g_k(z_k)-g_k(w_k)|}{|z_k-w_k|^{\beta}}
\ge\limsup_{k\to\infty}\frac{1}{2}Q_f(y_k)\left(\frac{|y_k|}{2}\right)^{\beta}=\infty.
\end{gather*} 
Hence the family $\{g_k \colon k\in\bN\}$ is not normal on $\B^n(1+\delta)$. Indeed, otherwise, there exists a locally uniform limit point of 
$\{g_k \colon k\in\bN\}$, which is $K$-quasiregular
on $\bB^n(1+\delta)$ but not $\beta$-H\"older continuous on $\bar \bB^n$. 
This is impossible.

By Theorem \ref{th:Miniowitz}, there exist sequences $(z_j)$, $(\rho_j)$ and $(k_j)$ in $\B^n(1+\delta)$, $(0,\infty)$, and $\bN$, respectively, and a non-constant $K$-quasiregular mapping $g\colon \R^n \to N$ such that
$\lim_{j\to\infty}\rho_j=0$, $\lim_{j\to\infty}k_j=\infty$, and 
\begin{gather*}
 \lim_{j\to\infty}g_{k_j}(z_j + \rho_j v)=g(v)
\end{gather*}
locally uniformly on $\R^n$. This completes the proof in this case.

Suppose next that 
\begin{gather}
\label{eq:less_infty}
M_f<\infty.
\end{gather}
Let $\{g_k:\B^n(e^k)\setminus\{0\}\to N;k\in\bN\}$ be the family of $K$-quasiregular mappings defined as
\begin{gather*}
 g_k(v):= f(e^{-k}v).
\end{gather*}
By \eqref{eq:less_infty}, we have for every $v \in \R^n\setminus \{0\}$,
\[
\limsup_{k\to\infty}\sup_{w\in \B^n(v,|v|/2), w\neq v}\frac{|g_k(w)-g_k(v)|}{|w-v|^{\beta}} \le M_f|v|^{-\beta}<\infty,
\]
so the family $\{g_k \colon k\in \bN\}$ is 
locally equicontinuous on $\bR^n\setminus\{0\}$.

By Lemma \ref{th:HeinonenRossi}, there exists a subsequence $(g_{k_j})$ of $(g_k)$ satisfying
\begin{gather}
\lim_{j \to \infty} \diam(g_{k_j}(\bar \B^n\setminus \B^n(e^{-1}))) > 0.
\label{eq:circle}
\end{gather}
By passing to a further subsequence of $(g_{k_j})$ if necessary, we may assume, by the Arzel\`a--Ascoli theorem, that $(g_{k_j})$ converges locally uniformly on $\bR^n\setminus\{0\}$ to a mapping $g \colon \R^n\setminus \{0\} \to N$. Since $(g_{k_j})$ is a sequence of $K$-quasiregular mappings, $g$ is $K$-quasiregular, and by \eqref{eq:circle}, non-constant. This completes the proof.
\end{proof}

\begin{ex}
\label{ex:1}
To see that both cases in the proof of Theorem \ref{thm:3_option} 
actually occur, we give two examples, which are similar to Examples 23 and 24 in \cite{Pankka08}.

For $M_f<\infty$, we may take the conformal mapping $f\colon \B^n\setminus \{0\} \to \bS^{n-1}\times \bS^1$, $x\mapsto (x/|x|, e^{-i \log |x|})$. Then $f$ is the composition $\psi \circ h$ of a conformal homeomorphism $h \colon \bB^n\setminus \{0\} \to \bS^{n-1}\times \R$, $x \mapsto (x/|x|, -\log |x|)$, and a locally isometric covering map $\psi \colon \bS^{n-1}\times \R \to \bS^{n-1}\times \bS^1$. Since $|h(sx)-h(ty)| \le |\log s - \log t| + |x-y|$ for all $s,t\in (0,1)$ and all $x,y\in \bS^{n-1}$, we easily observe that $M_f < \infty$.

For $M_f=\infty$, we construct $f\colon \B^n\setminus \{0\} \to \bS^n$ using the winding map $h \colon \bS^n \to \bS^n$,
\begin{gather*}
(x_1,\ldots, x_{n-2}, re^{i \theta}) \mapsto (x_1,\ldots, x_{n-2}, re^{i 3\theta}),
\end{gather*}
which is a quasiregular endomorphism on $\bS^n$; we identify $\R^{n+1}$ with $\R^{n-1}\times \bC$.

Let $\sigma \colon \bS^n \setminus \{e_{n+1}\}\to \R^n$ be the stereographic projection, and $S$ the lower hemisphere $\{(x_1,\ldots, x_{n+1}) \in \bS^n \colon x_{n+1}\le 0\}$ of $\bS^n$. We note that $h|\partial S$ is the identity.

Let $(r_k)$ be a sequence tending to $0$ in $(0,1/2)$ and put
$x_k := r_k e_1$ and $B_k := \B^n(x_k, r_k/2)$ for each $k\in\bN$. 
We may assume that balls $B_k$, $k\in\bN$, are mutually disjoint. For each $k\in\bN$, 
put $x'_k := \sigma^{-1}(x_k)$ and $B'_k := \sigma^{-1}(B_k)$ and
let $\rho_k$ be the the M\"obius transformation on $\bS^n$ defined as 

\begin{gather*}
\rho_k(y) = \begin{cases}
 \sigma^{-1}\circ \alpha_k \circ \sigma (y),& y\ne e_{n+1}\\
 e_{n+1},& y=e_{n+1}
\end{cases}
\end{gather*}
where $\alpha_k$ is the affine transformation $x \mapsto r_k^{-1}(x-x_k)$ on $\R^n$. Then
$\rho_k(B'_k) = S$ and $\rho_k(x'_k) = -e_{n+1}$, so
the mapping $f\colon \B^n\setminus \{0\} \to \bS^n$ defined by
\begin{gather*}
 f(x) = \begin{cases}
 \sigma^{-1}(x), & x\in (\B^n\setminus \{0\})\setminus\bigcup_{k=1}^\infty B_k \\
 (\rho_k^{-1} \circ h \circ \rho_k)\circ \sigma^{-1}(x), & x\in B_k
 \end{cases}
\end{gather*}
is quasiregular with the same distortion constant as $h$. 
We observe that, for every $k\in \bN$, there exists a unique $y_k\in B_k$ 
satisfying $f(y_k)=e_1$ and $|x_k-y_k|\le C r_k^2$, where $C>0$ is independent of $k$.
Since $f(x_k)=e_{n+1}$ for every $k\in \bN$, by a direct computation, 
there is $C'>0$ such that for every $k\in\bN$,
$Q_f(x_k) |x_k|^\beta \ge C'r_k^{-\beta}$, so $f$ satisfies $M_f=\infty$.
\end{ex}

\section{Proof of Theorem \ref{th:elliptic}}

Let $Z_n:\bR^n\to\bR^n\setminus\{0\}$, $n>2$, be the Zorich mapping
(see \cite{Zorich67} or \cite[I.3.3]{RickmanS:Quam} for the construction of $Z_n$),
which is $K_n$-quasiregular for some $K_n\ge 1$ and an analogue of the 
exponential function $Z_2:\bC\to\bC\setminus\{0\}$. 
The mapping $Z_2$ is $K_2$-quasiregular for $K_2=1$.
Set $K'=K\cdot K_n\ge 1$ for each $n\ge 2$ and each $K\ge 1$.

Let $N$ be a closed, connected, and oriented Riemannian $n$-manifold.

Suppose there exists a $K$-quasiregular mapping $f\colon \bB^n \setminus \{0\} \to N$ with an essential singularity at the origin.
By Theorem \ref{thm:3_option} and a manifold version of Hurwitz's theorem (cf.\ the proof of \cite[Lemma 2]{Miniowitz82}), there exists a non-constant $K$-quasiregular mapping $g\colon X \to N$, where $X$ is either $\R^n$ or $\R^n\setminus \{0\}$, satisfying $g(X)\subset f(\bB^n\setminus \{0\})$.
If $X=\R^n$, then the mapping $g$ has the desired properties. If $X=\R^n\setminus\{0\}$, then
the mapping $g\circ Z_n \colon \R^n \to N$ 
has the desired properties.
 
Suppose now that $g\colon \R^n \to N$ is a non-constant $K$-quasiregular mapping. 
Let $\iota$ be an orientation preserving conformal involution of $\R^n\setminus \{0\}$ satisfying $\iota(\bB^n\setminus\{0\})=\bR^n\setminus\overline{\bB^n}$. Then the mapping $f \colon \bB^n\setminus \{0\}\to N$, $x\mapsto g\circ Z_n(\iota(x))$, has the desired properties.


\begin{thebibliography}{10}

\bibitem{Bergweiler00}
{\sc Bergweiler,~W.} The role of the {A}hlfors five islands theorem in complex dynamics, {\em Conform. Geom. Dyn.}, {\bf 4} (2000), 22--34 (electronic).

\bibitem{BH01}
{\sc Bonk,~M.{\rm\ and }Heinonen,~J.} Quasiregular mappings and cohomology,
  {\em Acta Math.}, {\bf 186}, 2 (2001), 219--238.

\bibitem{GauldMartin93}
{\sc Gauld,~D.~B.{\rm\ and }Martin,~G.~J.} Essential singularities of
  quasimeromorphic mappings, {\em Math. Scand.}, {\bf 73}, 1 (1993), 36--40.

\bibitem{HR90}
{\sc Heinonen,~J.{\rm\ and }Rossi,~J.} Lindel\"of's theorem for normal
  quasimeromorphic mappings, {\em Michigan Math. J.}, {\bf 37}, 2 (1990),
  219--226.

\bibitem{HP05}
{\sc Holopainen,~I.{\rm\ and }Pankka,~P.} A big {P}icard theorem for
  quasiregular mappings into manifolds with many ends, {\em Proc. Amer. Math.
  Soc.}, {\bf 133}, 4 (2005), 1143--1150 (electronic).

\bibitem{HR98}
{\sc Holopainen,~I.{\rm\ and }Rickman,~S.} Ricci curvature, {H}arnack
  functions, and {P}icard type theorems for quasiregular mappings, Analysis and
  topology, World Sci. Publ., River Edge, NJ (1998),  315--326.

\bibitem{Iwaniec-Martin-book}
{\sc Iwaniec,~T.{\rm\ and }Martin,~G.} {\em Geometric function theory and
  non-linear analysis}, Oxford Mathematical Monographs, The Clarendon Press
  Oxford University Press, New York (2001).

\bibitem{LV57Fennicae}
{\sc Lehto,~O.{\rm\ and }Virtanen,~K.~I.} On the behaviour of meromorphic
  functions in the neighbourhood of an isolated singularity, {\em Ann. Acad.
  Sci. Fenn. Ser. A. I.}, {\bf 240}, (1957), 9 pp.

\bibitem{Minda85}
{\sc Minda,~D.} A heuristic principle for a nonessential isolated singularity, {\em Proc. Amer. Math. Soc.}, {\bf 93}, 3 (1985), 443--447.

\bibitem{Miniowitz82}
{\sc Miniowitz,~R.} Normal families of quasimeromorphic mappings, {\em Proc.
  Amer. Math. Soc.}, {\bf 84}, 1 (1982), 35--43.

\bibitem{Nevanlinna70}
{\sc Nevanlinna,~R.} Analytic functions, volume~{\bf 162} of {\em Die Grundlehren der mathematischen Wissenschaften}, Springer-Verlag, New York (1970).

\bibitem{Pankka06}
{\sc Pankka,~P.} Quasiregular mappings from a punctured ball into compact
  manifolds, {\em Conform. Geom. Dyn.}, {\bf 10} (2006), 41--62 (electronic).

\bibitem{Pankka08}
{\sc Pankka.~P.} Slow quasiregular mappings and universal coverings, {\em Duke Math. Jour.}, {\bf 141}(2):293--320, 2008.

\bibitem{RickmanS:Quam}
{\sc Rickman,~S.} Quasiregular mappings, volume~{\bf 26} of {\em Ergebnisse der Mathematik und ihrer Grenzgebiete (3) [Results in Mathematics and Related Areas (3)]}, Springer-Verlag, Berlin (1993).

\bibitem{VaropoulosN:Anagog} {\sc Varopoulos,~N.~Th., Saloff-Coste,~L., {\rm and\ }Coulhon,~T.} \emph{Analysis and geometry on groups}, volume {\bf 100} of \emph{Cambridge Tracts in Mathematics}, Cambridge University Press, Cambridge, 1992.

\bibitem{Zalcman75}
{\sc Zalcman,~L.} A heuristic principle in complex function theory, {\em Amer. Math. Monthly}, {\bf 82 (8)} (1975), 813--817.

\bibitem{Zorich67}
{\sc Zori{\v{c}},~V.~A.} M. {A}. {L}avrent\cprime ev's theorem on
  quasiconformal space maps, {\em Mat. Sb. (N.S.)}, {\bf 74 (116)} (1967),
  417--433.


\end{thebibliography}

\def\cprime{$'$}

\end{document}